\newtheorem{theorem}{Theorem}
\theoremstyle{plain}
\theoremstyle{definition}
\theoremstyle{plain}
\newtheorem{conjecture}{Conjecture}
\theoremstyle{definition}
\newtheorem{definition}{Definition}
\newtheorem{question}{Question}
\newtheorem{remark}{Remark}
\theoremstyle{plain}
\numberwithin{equation}{section}
\begin{document}
\title[Swapping \& Reconstruction]{2-Swappability and the Edge Reconstruction Number of Regular Graphs}
\author{Michael S. Ross}


\begin{abstract}
The edge-reconstruction number of graph $G$, denoted $ern(G)$,
	is the size of the smallest multiset of edge-deleted, unlabeled subgraphs
	of $G$, from which the structure of $G$ can be uniquely determined. 
	That there was some connection between the areas of edge 
	reconstruction and swappability	has been known since the swapping number of a graph was
	first introduced by Froncek, Rosenberg, and Hlavacek in 2013\cite{edgeswap}. 
	This paper illustrates the depth of that connection by proving several 
	bridging results	between those areas; in particular,
	when the graphs in question are both regular and 2-swappable.
	These connections led to the discovery of four infinite families of 
	$r\geq 3$ regular graphs with $ern(G) \geq 3$, contradicting the
	formerly conjectured upper bound.
\end{abstract}
\maketitle

\section*{Background}
We begin with some basic definitions in the areas of both graph reconstruction and the swappability of graphs. For the extent of this writing, all graphs introduced are understood to be finite,  simple, and connected unless explicitely stated otherwise.

\begin{definition}[Edge-Deck]
Let $G$ be a graph, and let $e\in E(G)$. Then the unlabeled graph $G-e$ is said to be an \textbf{edge-card} of $G$. The multiset of all such edge-cards of $G$ is called the \textbf{edge-deck} of $G$, and is written $\mathcal{ED}(G)$.
\end{definition}

\begin{definition}
Let $S\subseteq \mathcal{ED}(G)$. A \textbf{blocker} of $S$ is a graph $H\not\cong G$ such that $S\subseteq \mathcal{ED}(H)$.
If no such graph $H$ exists, then $G$ is said to be \textbf{reconstructable} from $S$.
\end{definition}

\begin{definition}
Let $G$ be a graph such that $G$ is reconstructable from $\mathcal{ED}(G)$. Then the \textbf{edge reconstruction number} of $G$, noted $ern(G)$,
is the size of the smallest $S\subseteq \mathcal{ED}(G)$ such that $G$ is reconstructable from $S$.
\end{definition}

Froncek, Hlavacek, and Rosenberg \cite{edgeswap} define $k$-swappability as follows.

\begin{definition}Let $G$ be a graph and let $k$ be a positive integer.
We say that $G$ is {\bf $k$-swappable} if $\forall e\in E(G)$,  there exists $A \subseteq E(G)$ and  $B \subseteq E(\bar{G})$ such that $e \in A$, $|A| \leq k$, and 
$G \cong G - A + B$.

The smallest such $k$ is called the {\bf swapping number} of $G$. If no such number exists, the swapping number of $G$ is said to be infinity.
\end{definition}

It is worth noting that in the context of regular graphs, there can be no individually replaceable edges. Thus, if a regular graph is finitely swappable, the size of the swapping sets for any edge is at least 2. 

\newpage

\section*{Results}

Originally published in K.J. Asciak's master's thesis at the University of Malta in 1999, the following conjecture was republished in a recent (2010) survey regarding open questions in reconstruction numbers \cite{survey}.

\begin{conjecture}\label{conj}
If $G$ is an $r\geq 3$ regular graph, then $ern(G)\leq 2$.
\end{conjecture}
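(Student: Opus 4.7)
Because the abstract announces that the paper contradicts Conjecture~\ref{conj} by producing four infinite families of $r\geq 3$ regular graphs with $ern(G)\geq 3$, I interpret the task as proposing a disproof, that is, how one would construct such counterexample families rather than attempting to prove a statement that the authors claim to refute.

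The plan is to exploit 2-swappability to manufacture blockers. Showing $ern(G)\geq 3$ requires that for every two-card multisubdeck $\{G-e_1,\,G-e_2\}\subseteq\mathcal{ED}(G)$ there exists $H\not\cong G$ with both cards in $\mathcal{ED}(H)$. A 2-swap replaces a pair of edges of $G$ with a pair of non-edges, so if $H$ is built from $G$ by a 2-swap whose edge-half contains $e_1$ and $e_2$, then $H$ and $G$ differ on only four edges --- exactly the sort of deformation that lets two edge-cards of $G$ reappear as edge-cards of $H$.

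Concretely, I would first prove a bridging lemma: if $G$ is regular and, for every pair $e_1,e_2\in E(G)$, there exist non-edges $f_1,f_2$ of $G$ and edges $g_1,g_2$ of $H:=G-\{e_1,e_2\}+\{f_1,f_2\}$ satisfying $H-g_i\cong G-e_i$ for $i=1,2$ and $H\not\cong G$, then $ern(G)\geq 3$. This is immediate from unwinding the definition of blocker. The 2-swappability of $G$, combined with the observation following Definition~4 that regular graphs force every swapping set to have size at least $2$, supplies the required non-edges: the canonical 2-swap for $e_1$ identifies a partner edge and a pair of non-edges, and one combines this with the analogous structure for $e_2$.

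To furnish infinite families, I would look at highly symmetric regular graphs in which 2-swappability is built in by construction. Natural candidates are vertex- and edge-transitive graphs such as complete bipartite graphs $K_{n,n}$, prisms, M\"obius ladders, hypercubes $Q_n$, and circulant graphs. Edge-transitivity is especially useful because it reduces the universal quantifier over edge pairs to one representative per orbit of $\mathrm{Aut}(G)$ on $E(G)\times E(G)$. In each family I would pin down a canonical 2-swap witnessing the swappability and then check that a mild variant of that swap produces an $H$ which still holds the two original cards in its deck but fails to be isomorphic to $G$.

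The main obstacle will be showing $H\not\cong G$. Since $H$ agrees with $G$ on all but two edges of a regular graph, the easy invariants --- order, size, degree sequence, regularity --- all match by design, so separating $H$ from $G$ requires finer ones such as triangle counts, $C_4$ counts, girth, or spectral data. A secondary obstacle is uniformity in $n$: for an $n$-indexed family one must rule out sporadic values of $n$ for which $H\cong G$ collapses accidentally, which typically forces a parity or divisibility hypothesis that ends up shaping the final statement of each infinite family.
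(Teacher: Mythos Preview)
You correctly recognise that the task is a disproof and that 2-swappability together with high symmetry should be the engine for building blockers. However, your scheme is significantly more complicated than necessary, and the complication is exactly where you yourself flag the ``main obstacle''.

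Two simplifications from the paper dissolve that obstacle entirely. First, rather than reducing to orbits of \emph{pairs} of edges, use edge-transitivity to observe that \emph{all edge-cards are isomorphic}: every two-element multisubset of $\mathcal{ED}(G)$ is literally $\{G-e,\,G-e\}$, so there is a single case, not one per orbit of $\mathrm{Aut}(G)$ on $E(G)\times E(G)$. Second --- and this is the crucial point you miss --- the blocker $H$ is not the full two-edge swap but the \emph{intermediate} graph $H=G-e+e'$ after only one step of the swap. Since $G$ is regular and $e'\neq e$, this $H$ is not regular, so $H\not\cong G$ is immediate from the degree sequence; no triangle counts, spectra, or parity conditions on $n$ are needed. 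One then checks $H-e'=G-e$ and, using the remaining half of the 2-swap together with removal similarity, $H-f\cong G-e$ as well, so $H$ blocks $\{G-e,\,G-e\}$.

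Your version, with $H=G-\{e_1,e_2\}+\{f_1,f_2\}$, forces $H$ to have the same degree sequence as $G$ and thus creates precisely the difficulty you describe. Moreover, your bridging lemma implicitly requires a 2-swap through \emph{any prescribed pair} $\{e_1,e_2\}$, which is strictly stronger than 2-swappability (the paper raises exactly this strengthening as an open question). Finally, your candidate families are off target: $K_{n,n}$ itself is not 2-swappable for $n\geq 3$ (any swap introduces an odd cycle), whereas the paper's four families are $K_n-M$, $K_n-H$, $K_{n,n}-M$, and $K_{n,n}-H$ (with $M$ a perfect matching and $H$ a Hamiltonian cycle), each of which is edge-transitive, regular, and shown to be 2-swappable by an explicit near-identity relabelling.
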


While exploring the possibility of this conjecture, we found the following relation between 2-swappability and $ern(G)$.

\begin{theorem}
If $G$ is a graph with $ern(G)\geq 3$, then $G$ is $2$-swappable.
\end{theorem}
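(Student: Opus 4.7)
The plan is to proceed by contradiction. Suppose $ern(G)\geq 3$ but $G$ is not $2$-swappable. Then there exists an edge $e\in E(G)$ admitting neither a $1$-swap nor a $2$-swap involving $e$; that is, $G - e + g\not\cong G$ for every $g\in E(\bar{G})$, and $G - \{e,e'\} + \{g_1,g_2\}\not\cong G$ for every $e'\in E(G)\setminus\{e\}$ and every pair of distinct non-edges $g_1,g_2\in E(\bar{G})$. I aim to derive a contradiction.

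First, I record a structural observation: any blocker $H$ of a singleton edge-deck $\{G - f\}$ satisfies $H\cong G - f + h$ for some $h\in E(\bar{G})$. Indeed, an isomorphism $\psi:H-k\to G-f$ must send the edge $k$ to a non-edge of $G-f$, and that image cannot be $f$ (else $H\cong G$). Applied to our situation: since $ern(G)\geq 3$, every two-card subset of $\mathcal{ED}(G)$ has a blocker. Choose $e'\in E(G)\setminus\{e\}$ with $G-e'\not\cong G-e$ (the alternative multiplicity case, where $G-e'\cong G-e$, is treated in parallel via the multiset $\{G-e,G-e\}$); then a blocker $H$ of $\{G-e,G-e'\}$ admits two descriptions $H\cong G-e+g_1\cong G-e'+g_2$ for non-edges $g_1,g_2\in E(\bar{G})$, yielding an isomorphism $\phi:G-e+g_1\to G-e'+g_2$ which I regard as a vertex bijection of $V(G)$.

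The central step is a case analysis on $\phi(e)\in\{e'\}\cup(E(\bar{G})\setminus\{g_2\})$ and $\phi(g_1)\in(E(G)\setminus\{e'\})\cup\{g_2\}$. Computing $\phi(E(G))$ or $\phi^{-1}(E(G))$ directly in each of the four sub-cases, three yield immediate contradictions. If $\phi(g_1)=g_2$ and $\phi(e)=e'$, then $\phi(E(G))=E(G)$, so $\phi$ is an automorphism of $G$ taking $e$ to $e'$, forcing $G-e\cong G-e'$ and contradicting the choice of $e'$. If $\phi(g_1)=g_2$ and $\phi(e)\neq e'$, then $\phi^{-1}(E(G))=E(G)\setminus\{e\}\cup\{g'''\}$ for some $g'''\in E(\bar{G})\setminus\{g_1\}$, yielding a $1$-swap of $e$ contradicting the hypothesis. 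If $\phi(g_1)\neq g_2$ and $\phi(e)\neq e'$, then similarly $\phi^{-1}(E(G))=E(G)\setminus\{e,e_3'\}\cup\{g_1,g'''\}$ for some $e_3'\in E(G)\setminus\{e\}$ and $g'''\in E(\bar{G})\setminus\{g_1\}$, yielding a $2$-swap involving $e$ that again contradicts.

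The only surviving sub-case is $\phi(e)=e'$ with $\phi(g_1)=e_3\in E(G)\setminus\{e'\}$, in which $\phi$ realises a $1$-swap $G\cong G-e_3+g_2$; here $e_3\neq e$, else we would contradict the no-$1$-swap hypothesis. The main obstacle is closing out this final sub-case, since a $1$-swap of some other edge $e_3$ does not by itself contradict the hypothesis on $e$. I expect to resolve it by iteration: apply the isomorphism $\sigma:G\to G-e_3+g_2$ to track $\sigma(e)$ and re-invoke $ern(G)\geq 3$ on a fresh two-card subset adapted to $e_3$, forcing eventual return to one of the three contradictory sub-cases; alternatively, an orbit-counting argument across all admissible $e'$ should show that no consistent assignment of $e_3(e')$ survives. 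The multiplicity case proceeds analogously using the second occurrence of $G-e$ in $\mathcal{ED}(H)$: the isomorphism $\chi:G-\{e,e_i\}+g\to G-e$ produces a parallel three-way contradiction via $\chi^{-1}$, leaving only an analogous $1$-swap of some $e_i\neq e$ to eliminate.
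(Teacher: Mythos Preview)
Your argument has a genuine gap. In the surviving sub-case ($\phi(e)=e'$ and $\phi(g_1)=e_3\in E(G)\setminus\{e'\}$) you obtain only the $1$-swap $G\cong G-e_3+g_2$ for some edge $e_3\neq e$, which does not contradict the hypothesis that $e$ itself admits no swap of size at most~$2$. The remedies you sketch---iterating the isomorphism $\sigma$, or an orbit-counting argument across all admissible $e'$---are not carried out, and neither is evidently sound: iterating a vertex permutation realising $G\cong G-e_3+g_2$ may cycle indefinitely without ever producing a swap through $e$, and replacing $e'$ by another edge can simply land you back in the same branch with a different $e_3$. The parallel ``multiplicity'' case is left in the same unfinished state. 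As written, the proof is incomplete.

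The paper's route is more direct and avoids both the contradiction hypothesis and the four-way case split. For arbitrary $e\in E(G)$ it fixes any second edge $f\in E(G-e)$, takes a blocker $H$ of the pair $\{G-e,G-f\}$, and uses the first card to \emph{identify} $H$ with $G-e+e'$ for some $e'\in E(\bar G)$. The second card relation $H-\tilde f\cong G-f$ then reads, under this identification, as $(G-e+e')-f'\cong G-f$ with $f'\in E(G-e)$ (the possibility $f'=e'$ being excluded because $\tilde e\neq\tilde f$; this is the content of the paper's Remark), and adding back the preimage $f''$ of $f$ yields $G-\{e,f'\}+\{e',f''\}\cong G$. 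Because the identification $H=G-e+e'$ is fixed at the outset, the edge $e$ sits in the removed set $\{e,f'\}$ by construction, so no branching on where $e$ is sent under an abstract isomorphism is needed.
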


\begin{proof}
Let $G$ be a graph, and suppose $ern(G)\geq 3$. Then $\forall S \subseteq \mathcal{ED}(G)$, where $\left|S\right|=2$, there is a graph that blocks $S$.
Let $e\in E(G)$, and let $f\in E(G-e)$. Then there must exist some blocker $H$ such that $\{G-e,G-f\}\subseteq \mathcal{ED}(H)$.  Therefore, there exist distinct $ \tilde{e},\tilde{f} \in E(H)$ such that: 

\begin{equation}H-\tilde{e}\cong G-e
\label{eq1}
\end{equation} 

\begin{equation}H-\tilde{f}\cong G-f.
\label{eq2}
\end{equation}
 This implies there is some $e'\in E(\bar{G})$ so that $G-e+e'\cong H$. By substituting the left hand side of this expression for $H$ in \eqref{eq2} we see that WLOG there must be some $f' \in E(G-e)$  so that
$G-e+e'-f'\cong G-f$ [See remark below], and thus there must exist $f''\in E(\bar{G})$ so that $G-e+e'-f'+f''\cong G$. Since $\{e,f'\}\subseteq E(G)$, and $\{e',f''\}\subseteq E(\bar{G})$, 
then $G-e+e'-f'+f'' = G-\{e,f'\}+\{e',f''\}$. Thus, for every edge $e\in E(G)$, there exists $f' \in E(G)$ and $\{e',f''\}\subseteq E(\bar{G})$ so that if the former edges are replaced with the latter, we get a graph isomorphic to the original. I.e. $G$ is $2$-swappable. 
\end{proof}

\begin{remark}
If $G-e\cong G-f$, there exist 2 distinct edges of $H$ that make suitable choices for $f'$. We choose the one that is not $e'$.
\end{remark}

Indeed, this makes a fair bit of sense. One would expect graphs with high edge reconstruction numbers and graphs that are finitely swappable to both have a relatively high level of symmetry. And so it turned out that 2-swappability was a necessary condition for a graph to have an edge reconstruction number higher than two. Then, what could be said of sufficient conditions? For this, we examined graphs with the highest level of symmetry. Namely, those for which all edges are removal similar. (i.e. All of their edge cards are isomorphic.) In regular graphs, that line of questioning gave us the following result.

\begin{theorem}
If $G$ is a regular, 2-swappable graph for which all edges are removal similar, then $ern(G)\geq 3$.
\end{theorem}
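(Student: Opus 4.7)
The plan is to show that for every 2-element subset $S \subseteq \mathcal{ED}(G)$, there exists a blocker $H \not\cong G$ with $S \subseteq \mathcal{ED}(H)$. The removal-similar hypothesis collapses all edge-cards to one isomorphism class $G^-$, so every 2-element subset of $\mathcal{ED}(G)$ is just the multiset $\{G^-, G^-\}$. Thus only one blocker needs to be constructed.

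To build it, I would invoke 2-swappability at an arbitrary edge $e \in E(G)$. As the paper notes, regularity forces the swapping set to have size exactly $2$ (and a quick count of edges shows $|B| = |A| = 2$ as well), so there exist $f \in E(G)\setminus\{e\}$ and $e', f' \in E(\bar{G})$ with $G \cong G - \{e,f\} + \{e',f'\}$. The candidate blocker is
\[
H \;:=\; G - e + e'.
\]

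Two verifications remain. First, I would check that $H \not\cong G$: since $G$ is $r$-regular, a brief case analysis on whether $e$ and $e'$ share an endpoint shows that the degree sequence of $H$ contains at least one vertex of degree $r-1$ and one of degree $r+1$, so $H$ is non-regular and therefore non-isomorphic to $G$. Second, I would exhibit two distinct edges of $H$ whose deletion yields $G^-$. The edge $e' \in E(H)$ gives $H - e' = G - e \cong G^-$ immediately. For a second edge, rewrite the 2-swappability isomorphism as $H - f + f' \cong G$, which yields $H - f \cong G - f'$, and then removal similarity gives $G - f' \cong G^-$. Since $e' \notin E(G)$ while $f \in E(G)$, the two chosen edges $e'$ and $f$ of $H$ are distinct, so $\{G^-, G^-\} \subseteq \mathcal{ED}(H)$.

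The only real obstacle is step (a), the non-isomorphism of $H$ and $G$; this is exactly where regularity (rather than just edge-transitivity or removal similarity alone) is crucial, because it guarantees that a single-edge swap must perturb the degree sequence. Once that is established, the rest of the argument is just bookkeeping built on the symmetry supplied by removal similarity together with the two-edge swap guaranteed by 2-swappability.
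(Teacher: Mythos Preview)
Your approach matches the paper's essentially line for line: set $H = G - e + e'$, use regularity to see $H \not\cong G$, and exhibit $e'$ and $f$ as two distinct edges of $H$ whose removal gives the unique card $G^-$. One small slip: from $H - f + f' \cong G$ you cannot conclude $H - f \cong G - f'$, because $f' \in E(\bar G)$ is not an edge of $G$; the isomorphism sends $f'$ to some edge $\hat f \in E(G)$, so the correct statement is $H - f \cong G - \hat f$, after which removal similarity applies to $\hat f$. The paper writes it exactly this way, so with that correction your argument is complete and coincides with the published proof.
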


\begin{proof}
Let $G$ be a regular, 2-swappable graph, for which all edges are removal similar. Let $e\in E(G)$. Then there exist  $f\in E(G-e)$ and $e',f'\in E(\bar{G})$
so that $$G-e+e'-f+f'\cong G$$.
Let $H=G-e+e'$. Since $G$ is regular, $H\not\cong G$. However, $H-e'=G-e$. Further, $H-f\cong G-\hat{f} \cong G-e$ for some $\hat{f}\in G$.
Thus, $\left\{H-e', H-f\right\} =\left\{G-e,G-e\right\}\subseteq \mathcal{ED}(H)$.

Let $S\subseteq \mathcal{ED}(G)$ with $\|S\|=2$. Since all edges of $G$ are removal similar,
$$S=\left\{G-e,G-e\right\}\subseteq \mathcal{ED}(H).$$
Therefore, $G$ is not reconstructable from $S$. So, $ern(G)\geq 3$.
\end{proof}

Finally we arrived at the most important question: "Do such graphs even exist?" If so, the conjecture we were examining would be false. But as 2-swappable graphs had never been studied before, it was not known; that is, until we discovered a wonderfully simple example, the standard 3 dimensional cube.


\begin{figure}[h]
\centering
\begin{subfigure}[b]{.24\textwidth}
\includegraphics[width=\textwidth]{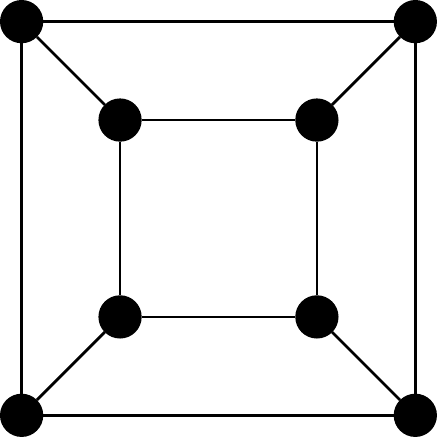}
\caption{$Q_3$}
\end{subfigure}
~
\begin{subfigure}[b]{.24\textwidth}
\includegraphics[width=\textwidth]{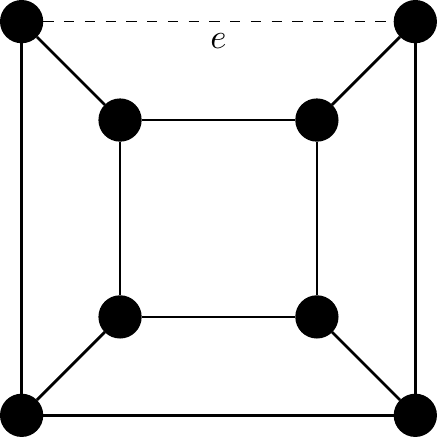}
\caption{$Q_3-e$}
\end{subfigure}
~
\begin{subfigure}[b]{.24\textwidth}
\includegraphics[width=\textwidth]{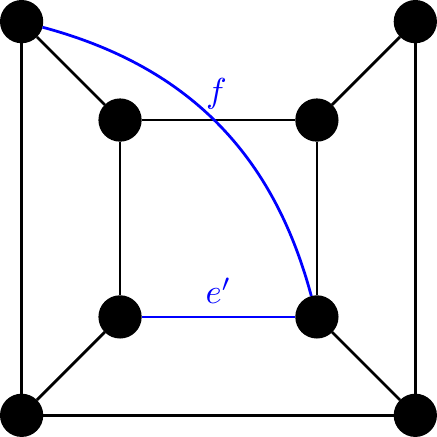}
\caption{$Q_3-e+f$}
\end{subfigure}
~
\begin{subfigure}[b]{.24\textwidth}
\includegraphics[width=\textwidth]{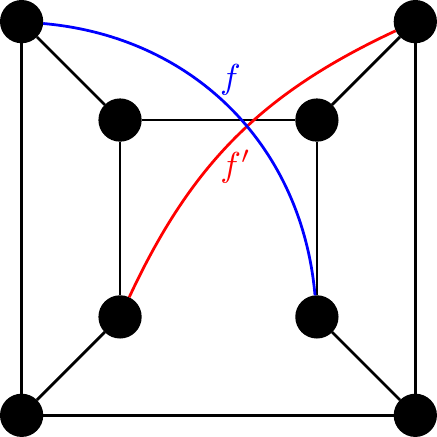}
\caption{$Q_3-e-e'+f+f'$}
\end{subfigure}
\caption{$Q_3$ is 2-swappable.}
\end{figure}


This counterexample quickly inspired the discovery of four infinite families of regular graphs that all had an edge reconstruction number greater than two, listed and proven below.

\begin{theorem}
If $G \cong K_n-M$, where $M$ is a perfect matching, then $G$ is 2-swappable.
\end{theorem}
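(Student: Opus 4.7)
The plan is to set up coordinates on $G$ and produce an explicit swap. Since $G \cong K_n - M$, we must have $n$ even, say $n = 2k$, and I would label the vertices as pairs $\{1, 1', 2, 2', \ldots, k, k'\}$ with $M = \{ii' : 1 \le i \le k\}$. Thus $\bar{G}$ consists entirely of the matching edges $ii'$, and in $G$ two vertices are adjacent iff they are not paired.

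Next, given an arbitrary edge $e \in E(G)$, I would write $e = ij$ where $i \neq j$ and $j \neq i'$. The candidate swap I would propose is
\[
A = \{e, f\} = \{ij,\ i'j'\}, \qquad B = \{e', f'\} = \{ii',\ jj'\}.
\]
First I would verify this is legal: $i'j'$ is indeed an edge of $G$ (since $i' \neq j'$ implies they are not in the same matched pair), so $A \subseteq E(G)$; and $ii', jj' \in M = E(\bar{G})$, so $B \subseteq E(\bar{G})$.

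Then I would verify $G - A + B \cong G$ by tracking the non-edges. The non-edges of $G$ form the matching $M$. After the swap, we turn $ii'$ and $jj'$ into edges and turn $ij$ and $i'j'$ into non-edges, so the new non-edge set is
\[
\bigl(M \setminus \{ii',\ jj'\}\bigr) \cup \{ij,\ i'j'\} \;=\; \{kk' : k \neq i, j\} \cup \{ij,\ i'j'\}.
\]
This is again a perfect matching on the $n$ vertices (every vertex appears in exactly one pair), so $G - A + B$ is $K_n$ minus a perfect matching, hence isomorphic to $G$. Since $e \in A$ and $|A| = 2$, this establishes $2$-swappability.

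The argument has no real obstacle: the only subtle point is exhibiting the correct partner edge $f = i'j'$ and the correct pair of matching edges to restore. The isomorphism test is then immediate because the class ``$K_n$ minus a perfect matching'' is closed under the operation, so no explicit vertex permutation needs to be written down (though the permutation swapping $j$ with $i'$ provides one if desired). I would end by noting that this swap works uniformly for every edge $e$, completing the proof.
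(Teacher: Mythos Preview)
Your proof is correct and uses exactly the same swap as the paper: remove $\{i,j\}$ and $\{i',j'\}$, insert the matching edges $\{i,i'\}$ and $\{j,j'\}$, and conclude the result is again $K_n$ minus a perfect matching (the paper instead names the explicit isomorphism $u\leftrightarrow v'$, which in your labeling is $i\leftrightarrow j'$; your suggested $j\leftrightarrow i'$ works equally well). One tiny quibble: the parenthetical ``$i'\neq j'$ implies they are not in the same matched pair'' is not the right justification---distinct vertices can certainly be matched---but your standing hypothesis $j\neq i'$ gives $j'\neq (i')'=i$, which is what actually forces $\{i',j'\}\notin M$.
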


\begin{proof}
Let $\{u,v\}\in E(G)$. Then there exist $u',v' \in G$ such that $\{u,u'\},\{v,v'\} \notin E(G)$, and $\{u',v'\} \in E(G)$. Then,
$$ G \cong G-\{u,v\} -\{u',v'\} + \{u,u'\} +\{v,v'\} $$
by a near-identity mapping that swaps $u$ with $v'$ and maps all other vertices to themselves. Thus, $G$ is 2-swappable.
\end{proof}

\begin{theorem}
If $G \cong K_n-H$ where $H$ is a Hamiltonian cycle, and $n\geq 5$ then $G$ is 2-swappable.
\end{theorem}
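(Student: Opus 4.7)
\medskip
\noindent\textbf{Proof plan.} Label the vertices so that $H = v_1 v_2 \cdots v_n v_1$. Then $E(\bar{G})$ is exactly the set of cycle edges $\{v_k, v_{k+1}\}$ (indices mod $n$), and $E(G)$ is the set of chords $\{v_i, v_j\}$ with $|i - j| \not\equiv 1 \pmod{n}$. The plan is to exhibit, for each edge $e = \{v_i, v_j\} \in E(G)$, a 2-swap obtained by \emph{shifting $e$ one step along $H$}. I set
\[
f = \{v_{i+1},\, v_{j+1}\}, \qquad e' = \{v_i,\, v_{i+1}\}, \qquad f' = \{v_j,\, v_{j+1}\},
\]
with indices mod $n$. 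A short index check---where the hypothesis $n \geq 5$ enters, also guaranteeing $G$ is connected---shows that $f \in E(G)$, that $e', f' \in E(\bar{G})$, and that $e, f, e', f'$ are pairwise distinct.

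To verify $G - \{e, f\} + \{e', f'\} \cong G$, I will pass to complements in $K_n$. Setting $H' := H - \{e', f'\} + \{e, f\}$, it suffices to show that $H'$ is itself a Hamiltonian cycle, for then $G - \{e,f\} + \{e',f'\} = K_n - H' \cong K_n - H = G$. Deleting the two cycle edges $e', f'$ from $H$ splits it into the vertex-disjoint paths
\[
P_1 = v_{i+1}\, v_{i+2}\, \cdots\, v_j \quad\text{and}\quad P_2 = v_{j+1}\, v_{j+2}\, \cdots\, v_i,
\]
with endpoints $\{v_{i+1}, v_j\}$ and $\{v_{j+1}, v_i\}$ respectively. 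The chord $e = \{v_i, v_j\}$ joins the endpoint $v_j$ of $P_1$ to the endpoint $v_i$ of $P_2$, and $f = \{v_{i+1}, v_{j+1}\}$ joins the remaining endpoints, closing the union into the single Hamiltonian cycle
\[
v_{i+1} \to v_{i+2} \to \cdots \to v_j \to v_i \to v_{i-1} \to \cdots \to v_{j+1} \to v_{i+1}.
\]

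The only real subtlety is ensuring that the two added chords \emph{straddle} $P_1$ and $P_2$ rather than both lying inside one of them (which would produce two smaller cycles instead of one Hamiltonian cycle). The shift-by-one construction automatically provides this straddling by pairing an endpoint of $P_1$ with an endpoint of $P_2$ under each of $e$ and $f$; once this observation is in hand, the rest is routine combinatorial bookkeeping and the regularity-free argument proceeds uniformly in $n$.
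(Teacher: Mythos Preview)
Your proof is correct and is essentially the same argument as the paper's: with your labeling, the paper's $u,v,u',v'$ are your $v_i,v_j,v_{i+1},v_{j+1}$, and the paper's new Hamiltonian cycle $H' = (H_1-\{u,u'\}+\{u,v\})\cup(H_2-\{v,v'\}+\{u',v'\})$ is exactly your $H - \{e',f'\} + \{e,f\}$. You are simply more explicit about the cyclic indexing and the ``straddling'' check, whereas the paper phrases the same swap via the path-bisection language.
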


\begin{proof}
Let $\{u,v\}\in E(G)$ then $\{u,v\}$ bisects $H$ into two edge disjoint paths, $H_1$ and $H_2$. Without loss of generality, let $\{u,u'\}\in H_1$ and $\{v,v'\} \in H_2$. Then, $$H':=\{H_1-\{u,u'\}+\{u,v\}\}\cup\{H_2-\{v,v'\}+\{u',v'\}\}$$ is a Hamiltonian cycle in $K_n$. Therefore,
$$ G-\{u,v\} -\{u',v'\} + \{u,u'\} +\{v,v'\} = \bar{H'} \cong \bar{H} = G $$.

Thus, $G$ is 2-swappable.

\end{proof}

\begin{definition}
Given a bipartite graph $G_{n,m}$, the bipartite complement of $G$, is $\hat{G}:=K_{n,m}-G$.
\end{definition}

\begin{theorem}
If $G\cong K_{n,n}-M$, where $M$ is a perfect matching, then $G$ is 2-swappable.
\end{theorem}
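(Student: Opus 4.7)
The plan is to imitate the strategy used in the proof for $K_n - M$: for a given edge $e$ in $G = K_{n,n} - M$, produce an explicit 2-edge swap whose resulting graph is again the complete bipartite graph minus a perfect matching, and then invoke the fact that all such graphs are isomorphic.

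First I would fix notation by writing the bipartition as $A = \{a_1, \dots, a_n\}$ and $B = \{b_1, \dots, b_n\}$, and relabeling if necessary so that $M = \{a_ib_i : 1 \le i \le n\}$. Under this labeling, $E(G) = \{a_ib_j : i \ne j\}$, and the edges of $K_{n,n}$ missing from $G$ are precisely the edges of $M$, which lie in $E(\bar G)$.

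Next, let $e = a_ib_j \in E(G)$ be arbitrary, so $i \ne j$. The natural 2-swap is to take
\[
f = a_jb_i, \qquad e' = a_ib_i, \qquad f' = a_jb_j.
\]
Here $f \in E(G)$ because $j \ne i$, and $\{e',f'\} \subseteq M \subseteq E(\bar G)$. Set $G' = G - \{e,f\} + \{e',f'\}$. I would then compute the complement of $G'$ in $K_{n,n}$ and observe that it equals
\[
M' = (M \setminus \{a_ib_i, a_jb_j\}) \cup \{a_ib_j, a_jb_i\},
\]
which is again a perfect matching of $K_{n,n}$ (each $a_k$ and each $b_k$ still appears exactly once).

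Finally, I would note that any two graphs of the form $K_{n,n} - M$ with $M$ a perfect matching are isomorphic, since a suitable permutation of the labels of $B$ sends any prescribed perfect matching to any other. Hence $G' \cong G$, which by definition proves that $G$ is 2-swappable. There is no real obstacle here; the only point requiring a sentence of care is to check that the edges $e'$ and $f'$ indeed belong to $E(\bar G)$ (they do, as they are the matching edges $a_ib_i, a_jb_j$ that were removed from $K_{n,n}$ to obtain $G$) and that the set $M'$ one writes down is genuinely a perfect matching, which is immediate from the construction.
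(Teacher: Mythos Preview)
Your proof is correct and uses essentially the same 2-swap as the paper: given $e=a_ib_j$, remove $\{a_ib_j,a_jb_i\}$ and insert the matching edges $\{a_ib_i,a_jb_j\}$. The only cosmetic difference is that the paper certifies the isomorphism by exhibiting the explicit near-identity map swapping the two vertices $a_i$ and $a_j$ in the same part, whereas you invoke the (equivalent) fact that any two copies of $K_{n,n}$ minus a perfect matching are isomorphic.
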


\begin{proof}
Let $\left\{u,v\right\}\in E(G)$. Then there exist $u',v'\in G$ such that $\{u,u'\}, \{v,v'\}\in E(\hat{G})$ and $\left\{u',v'\right\}\in E(G)$.
Then, $$ G \cong G-\{u,v\} -\{u',v'\} + \{u,u'\} +\{v,v'\} $$ by a near-identity mapping that swaps $u$ with $v'$, (Which are in the same bipartite set) and maps all other vertices to themselves. Thus, $G$ is 2-swappable
\end{proof}

\begin{theorem}
If $G \cong K_{n,n} - H$, where $H$ is a Hamiltonian cycle, and $n\geq 4$ then $G$ is 2-swappable.
\end{theorem}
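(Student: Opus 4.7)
The plan is to mimic the preceding theorem's argument (for $K_n$ minus a Hamiltonian cycle), but to track the bipartite structure of $K_{n,n}$ throughout.

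Let $A$ and $B$ be the parts of $K_{n,n}$ and pick $\{u,v\}\in E(G)$, so we may take $u\in A$ and $v\in B$. Because $\{u,v\}\in E(G)$, the edge $\{u,v\}$ lies in $K_{n,n}\setminus H$, and as a chord of the cycle $H$ it partitions $H$ into two internally disjoint $u$-$v$ paths $H_1$ and $H_2$. Let $u'$ denote the neighbor of $u$ along $H_1$ and $v'$ the neighbor of $v$ along $H_2$. Since $H$ alternates between $A$ and $B$, $u'\in B$ and $v'\in A$, hence $\{u',v'\}$ is a legitimate edge of $K_{n,n}$.

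Form $H':=(H-\{u,u'\}-\{v,v'\})+\{u,v\}+\{u',v'\}$ and check, exactly as in the $K_n$ case, that $H'$ is again a Hamiltonian cycle in $K_{n,n}$: removing $\{u,u'\}$ from $H_1$ leaves a $u'$-to-$v$ path, removing $\{v,v'\}$ from $H_2$ leaves a $v'$-to-$u$ path, and the two new edges splice these into one Hamiltonian cycle through all $2n$ vertices. The only subtlety is that $\{u',v'\}$ must not already sit in $H$; this holds because $u'$ is an interior vertex of $H_1$, so both of its $H$-neighbors lie in $H_1$, while $v'$ lies in $H_2\setminus\{u,v\}$ and the two paths share only their endpoints. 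The hypothesis $n\geq 4$ is used to ensure $G$ is a (connected) graph in the paper's standing sense.

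Because any two Hamiltonian cycles of $K_{n,n}$ are related by a vertex relabeling that preserves the bipartition, one has $K_{n,n}-H'\cong K_{n,n}-H=G$. Unwinding the definitions, this says $G-\{u,v\}-\{u',v'\}+\{u,u'\}+\{v,v'\}\cong G$, which is precisely a 2-swap of the arbitrary edge $\{u,v\}$; hence $G$ is 2-swappable. The main obstacle I anticipate is the simple-graph check on $H'$, specifically confirming $\{u',v'\}\notin E(H)$ so that the construction really produces a Hamiltonian cycle rather than a multigraph; the remaining steps are direct parallels of the non-bipartite Hamiltonian-cycle theorem.
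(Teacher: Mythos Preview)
Your proof is correct and follows essentially the same approach as the paper's: split $H$ into two $u$--$v$ paths, take neighbors $u'$ and $v'$ on opposite paths, and re-splice to obtain a new Hamiltonian cycle $H'$ whose bipartite complement is isomorphic to $G$. You are in fact more careful than the paper, explicitly verifying that $\{u',v'\}$ is a genuine $K_{n,n}$-edge not already in $H$ and that any two Hamiltonian cycles of $K_{n,n}$ are related by an automorphism; the paper simply asserts these steps.
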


\begin{proof}
Let $\{u,v\} \in E(G)$. Then $\{u,v\}$ bisects $H$ into two edge disjoint paths, $H_1$ and $H_2$. Without loss of generality, let $\{u,u'\}\in H_1$ and $\{v,v'\} \in H_2$. Then, $$H':=\{H_1-\{u,u'\}+\{u,v\}\}\cup\{H_2-\{v,v'\}+\{u',v'\}\}$$ is a Hamiltonian cycle in $K_{n,n}$. Therefore,
$$ G-\{u,v\} -\{u',v'\} + \{u,u'\} +\{v,v'\} = \hat{H'} \cong \hat{H} = G $$.

Thus, $G$ is 2-swappable.
\end{proof}

\begin{theorem}
For each of the following graphs $G$, $ern(G)\geq 3$.
\begin{enumerate}
\item $G \cong K_n-M$, where $M$ is a perfect matching,
\item	$G \cong K_n-H$ where $H$ is a Hamiltonian cycle,
\item	$G\cong K_{n,n}-M$, where $M$ is a perfect matching,
\item	$G \cong K_{n,n} - H$, where $H$ is a Hamiltonian cycle.
\end{enumerate}
\end{theorem}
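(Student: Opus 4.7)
My plan is to reduce each of the four cases to the structural theorem just proved, which guarantees $ern(G)\geq 3$ whenever $G$ is regular, 2-swappable, and all of its edges are removal similar. Each family is clearly regular by inspection of the construction ($K_n-M$ is $(n-2)$-regular, $K_n-H$ is $(n-3)$-regular, and similarly for the bipartite analogues), and 2-swappability has just been verified in the four preceding theorems. What remains in each case is to prove that, within each family, all edge-cards of $G$ are mutually isomorphic.

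For (1) and (3) I would invoke edge-transitivity. The graph $K_n-M$ is the cocktail party graph $K_{n/2\times 2}$, whose automorphism group $S_2\wr S_{n/2}$ acts transitively on its edges; similarly, $K_{n,n}-M$ is the crown graph, which is likewise edge-transitive. In each case every pair of edges is related by an automorphism of $G$, so all edge-deleted subgraphs are isomorphic and the structural theorem applies directly.

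For (2) and (4) I would try to produce explicit isomorphisms between arbitrary edge-cards $G-e$ and $G-f$. The natural source of symmetry is the dihedral action $D_n$ on the removed Hamiltonian cycle $H$, augmented in case (4) by the bipartition swap. Where no single dihedral automorphism carries $e$ to $f$, the idea is to compose that dihedral move with the local swap operation used in the 2-swappability proofs, exploiting it to interchange chords of different ``gap'' around $H$ modulo isomorphism.

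The main obstacle lies precisely in (2) and (4): for $n\geq 6$ the chords of the Hamiltonian cycle split into several orbits under $D_n$, one for each value of the gap around the cycle, so edge-transitivity is not automatic and removal similarity must be argued more delicately. The crux of the proof is therefore to bridge those orbits and verify that cards coming from chords of different gaps are nevertheless isomorphic. Once removal similarity has been secured in all four cases, the structural theorem immediately delivers $ern(G)\geq 3$ for every member of the four families.
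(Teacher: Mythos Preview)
Your treatment of families (1) and (3) via edge-transitivity is exactly what the paper does, and it is correct.

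For families (2) and (4) the paper simply asserts edge-transitivity as well; you are right to be suspicious of that claim, since the automorphism group of $K_n-H$ (respectively $K_{n,n}-H$) coincides with that of $H$, namely the dihedral group, and the chords of $H$ split into several dihedral orbits according to their gap. Your instinct to replace edge-transitivity by the weaker hypothesis of removal similarity is the natural move.

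However, this weaker hypothesis is still too strong, and your proposed ``bridging'' argument cannot be completed. Take $n=6$ in family~(2): $K_6-C_6$ is the triangular prism $K_3\,\square\,K_2$. A triangle edge lies in a triangle while a rung does not, so deleting a triangle edge leaves the two degree-$2$ vertices with a common neighbour, whereas deleting a rung leaves them with none. The two edge-cards are therefore non-isomorphic, and removal similarity fails outright. The same phenomenon (chords of different gap giving non-isomorphic cards) persists for larger $n$ and in the bipartite family~(4). Consequently the structural theorem you are trying to invoke does not apply to families (2) and (4) in general, and the reduction you outline breaks down at precisely the point you flagged as the ``main obstacle''. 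Any valid proof for these two families must produce, for \emph{each} pair of edge-cards, an explicit blocker, rather than relying on a single global symmetry hypothesis; the paper's one-line appeal to edge-transitivity shares this gap.
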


\begin{proof}
All of the above graphs are regular and 2-swappable. They are also all edge transitive, and thus all of their edges are respectively removal similar.
\end{proof}

Thus, we have not only demonstrated that there are indeed $r\geq3$ regular graphs with an edge reconstruction number that is greater than 2, but also done much of the ground work linking the topics of edge reconstruction and the swappability of graphs. The hope is that these results can be leveraged to provide insight into both areas in the future.

\section*{Future Work}
The framework of swappability as applied to edge reconstruction provides a number of potential leads for getting more edge reconstruction results. And so, we present the following questions and ideas.

\begin{question}
Do there exist other regular, 2-swappable graphs?
\end{question}

Currently, the only regular, connected, 2-swappable graphs known are cycles, and those 4 families presented in this paper. All of these are edge transitive, and thus all have an edge reconstruction number greater than 2.

\begin{question}
What are the actual edge reconstruction numbers of the graphs in this paper?
\end{question}

All we have done here is give a lower bound for the edge reconstruction number of these graphs. But what is their $ern$ exactly? It is known that this number is bounded above as a function of the regularity of the graph ($ern(G)\leq r-2$). But since our results refuted the conjecure that this number was bounded above by 2, the following questions must be asked.

\begin{question}
Let $R:=\{G|G \textrm{ is an } r\geq3 \textrm{ regular graph}\}$. Does there exist $k$ such that for all $G\in R$, $ern(G)\leq k$?
\end{question}

\begin{question}
Given $r\geq 3$, do there exist $r$-regular graphs $G_3,G_4,\dots,G_{r-2}$ such that $ern(G_i)=i$?
\end{question}

Lastly, 2-swappability implying $ern(G)\geq 3$ has only been proven for when $G$ has only one type of card in its edge-deck. Thus we have the last question:

\begin{question}
Can the notion of 2-swappability be modified to provide a necessary and sufficient condition for $ern(G)\geq 3$?
\end{question}

We believe this to be possible, and it would be something along the lines of for every pairing of edge-cards, it must be possible to swap 2 edges of those respective types with 2 edges from the complement. Giving a sort of "Full 2-swappability". We have played with this idea a bit, and the results seem promising.


\begin{thebibliography}{9}                                                                                                %
\bibitem {edgeswap} \textsc{D. Froncek,  A. Hlavacek, S.J. Rosenberg,}:\ \textit{Edge Reconstruction and the Swapping Number of a Graph}, Australas. J. Combin. \textbf{58(1)} (2014) 1-15.

\bibitem {survey} \textsc{K.J. Asciak, M.A. Francalanza, J. Lauri, W. Myrvold}:\ \textit{A Survey of some Open Questions in Reconstruction Numbers}, Ars Combin., \textbf{97} (2010), 443-456.
\end{thebibliography}
\end{document}